\documentclass{article}

\parskip 0pt

\linespread{1.2}

\usepackage{amsmath}
\usepackage{amsfonts}
\usepackage{amsthm}
\usepackage{amssymb}
\usepackage{mathrsfs}
\usepackage{mathtools}
\usepackage{geometry}
\usepackage{graphicx}
\usepackage{hyperref}
\hypersetup{
    colorlinks=true,       
    linkcolor=red,          
    citecolor=blue,       
}

\renewcommand{\Re}{\text{\rm Re}\,}
\renewcommand{\Im}{\text{\rm Im}\,}
\newcommand{\PT}{\mathcal{PT}}

\newcommand{\T}{\mathcal{T}}
\newcommand{\ii}{{\rm i}}
\newcommand{\dd}{{\rm d}}
\newcommand{\ee}{{\rm e}}
\newcommand{\hilb}{\mathscr{H}}
\renewcommand{\arg}{\mathrm{arg}}
\newcommand{\Dom}{\mathrm{Dom}}

\newcommand{\beq}{\begin{equation}}
\newcommand{\eeq}{\end{equation}}
\newcommand{\beqs}{\begin{equation*}}
\newcommand{\eeqs}{\end{equation*}}
\newcommand{\bal}{\begin{aligned}}
\newcommand{\eal}{\end{aligned}}
\newcommand{\RR}{\mathbb{R}}
\newcommand{\CC}{\mathbb{C}}

\newcommand{\eps}{\varepsilon}

\theoremstyle{plain}
\newtheorem{thm}{Theorem}[section] 

\newtheorem{lem}[thm]{Lemma}

\theoremstyle{definition}

\theoremstyle{remark}

\usepackage[affil-it,auth-sc-lg]{authblk}

\title{On the pseudospectrum of the harmonic oscillator with imaginary cubic potential}
\date{\today}
\author{Radek Nov\'{a}k\thanks{Electronic address: \texttt{novakra9@fjfi.cvut.cz}}}
\affil{Department of Physics, Faculty of Nuclear Sciences and Physical Engineering, Czech Technical University in Prague, B\v rehov\' a 7, 115 19, Prague, Czech Republic}
\affil{Department of Theoretical Physics, Nuclear Physics Institute, Academy of Sciences of the Czech Republic, Hlavn\' i 130, 250 68 \v Re\v z near Prague, Czech Republic}
\affil{Laboratoire de Math\'{e}matiques Jean Leray, 9, Universit\'{e} de Nantes, 2 rue de la Houssini\`{e}re, 443 22 Nantes Cedex 3, France}
\numberwithin{equation}{section}
\providecommand{\keywords}[1]{\textbf{\textit{Key words: }} #1\\[0.5em]}
\providecommand{\classification}[1]{\textbf{\textit{Mathematics Subject Classification (2010): }} #1}
%
\begin{document}
%
\maketitle
\begin{abstract}
We study the Schr\"{o}dinger operator with a potential given by the sum of the potentials for harmonic oscillator and imaginary cubic oscillator and we focus on its pseudospectral properties. A summary of known results about the operator and its spectrum is provided and the importance of examining its pseudospectrum as well is emphasized. This is achieved by employing scaling techniques and treating the operator using semiclassical methods. The existence of pseudoeigenvalues very far from the spectrum is proven, and as a consequence, the spectrum of the operator is unstable with respect to small perturbations and the operator cannot be similar to a self-adjoint operator via a bounded and boundedly invertible transformation. It is shown that its eigenfunctions form a complete set in the Hilbert space of square-integrable functions; however, they do not form a Schauder basis.
\end{abstract}
\keywords{pseudospectrum, harmonic oscillator, imaginary qubic potential, $\PT$-symmetry, semiclassical method}
\classification{35J05, 35P15, 47B44}
%
\section{Introduction} \label{sekcejedna}
%
One of the first observations of purely real spectrum in a non-self-adjoint Schr\"{o}dinger operator occured in \cite{caliceti1980} by Caliceti et al. The authors studied the class of operators $-\mathrm{d}^2/\mathrm{d} x^2 + x^2 + \beta x^{2n+1}$ on $L^2(\RR)$ for a general complex $\beta$ and noticed that the spectrum is real provided $\arg(\beta) = \pi/2$ and $\beta$ is sufficiently small. This property was later attributed to the $\PT$-symmetry of the considered operator. The so-called $\PT$-symmetric quantum mechanics originated with the numerical observation of a purely real spectrum of an imaginary cubic oscillator Hamiltonian \cite{Bender-1998} and rapidly developed thenceforth. See e.g. \cite{Bender-2007, Mostafazadeh-2010} and references therein for a survey of papers in this area. The $\PT$-symmetry property of an operator $H$ should be understood in this paper as the invariance of $H$ with respect to the space inversion and the time reversal on the Hilbert space $L^2(\RR)$, i.e.
\beq \label{PTsymmetry}
[H,\PT] = 0
\eeq
in the operator sense, where $(\mathcal{P} \psi)(x) := \psi(-x)$ stands for spatial reflection and $(\T \psi)(x) := \overline{\psi(x)}$ stands for time reversal in quantum mechanics. Such operator possesses a relevant physical interpretation as an observable in quantum mechanics provided it is similar to a self-adjoint operator
\beq \label{similarity}
h=\Omega H \Omega^{-1},
\eeq
where $\Omega$ is a bounded and boundedly invertible operator. Then it is ensured that the spectra of $h$ and $H$ are identical and that the corresponding families of eigenfunctions share essential basis properties \cite{KrejcirikViola}. The similarity to a self-adjoint operator is in fact equivalent to the quasi-self-adjointness of $H$,
\beq \label{quasiH}
H^* \Theta = \Theta H,
\eeq
where the operator $\Theta$ is positive, bounded and boundedly invertible \cite{hussein, Scholtz-1992}. It is often called a metric, since the operator $H$ can be seen as self-adjoint in the space with the modified scalar product $(\cdot, \Theta \cdot)$. The equivalence can be easily seen from the decomposition of a positive operator $\Theta = \Omega^* \Omega$ \cite[Prop. 1.8]{kapitola}.
\paragraph{}In recent years it has been shown that the spectrum is not necessarily the best notion to describe properties of a non-self-adjoint operator and the use of $\eps$-pseudospectrum, denoted here $\sigma_{\eps}(H)$ and defined as
\beq
\sigma_{\varepsilon}(H) := \left\{ \lambda \in \CC \,\left|\, \left\|(H-\lambda)^{-1}\right\|> \varepsilon^{-1}\right.\right\},
\eeq
was suggested instead \cite{davies99, Dencker, henry, kapitola, KrejcirikViola, Krejcirik12, trefethen}. In \cite{Krejcirik12} authors studied the operator $-\mathrm{d}^2/\mathrm{d} x^2 + \ii x^3$ and derived completeness of its eigenfunctions, found the bounded metric operator $\Theta$ and proved that it can not have a bounded inverse. These results were further supplemented in \cite{KrejcirikViola} where the existence of points in the pseudospectrum far from the spectrum was established. This paper aims to apply the methods used in these papers to the operator $-\mathrm{d}^2/\mathrm{d} x^2 + x^2 +\ii x^3$, whose several properties were investigated e.g. in \cite{caliceti1980, delabaere, fernandez, mezincescu}. Our aim is to establish results which can be directly extended to the more general case $-\dd^2/\mathrm{d} x^2 + x^2 + \ii x^{2n+1}$, $n \geq 1$. We choose to study the case $n=1$ to show its relation to the famous imaginary cubic oscillator.
\paragraph{}Let us consider the Hilbert space $L^2(\RR)$ and define the operator $H$ acting on its maximal domain:
\beq \label{Ham}
\bal
H &:= -\frac{\mathrm{d}^2}{\mathrm{d} x^2} + x^2 + \ii x^3,\\
\Dom(H) &:= \left\{ \psi \in W^{2,2}(\RR)\,\left|\, -\frac{\mathrm{d}^2 \psi}{\mathrm{d} x^2} + x^2 \psi + \ii x^3\psi \in L^2(\RR)\right.\right\}.
\eal
\eeq
It was shown in \cite{caliceti1980} that $\Dom(H)$ coincides with $\left\{ \psi \in W^{2,2}(\RR)\,\left|\, x^3\psi \in L^2(\RR)\right.\right\}$ and that $H$ is closed. Furthermore, it is an operator with compact resolvent and therefore its spectrum is discrete (i.e.\ consists of isolated eigenvalues of finite algebraic multiplicity). The reality and the simplicity of the eigenvalues was established in \cite{shin02}. Using the approach of \cite[Sec.~ VII.2]{EE} shows that $H$ coincides with the closure of \eqref{Ham} defined on smooth functions with compact support and that it is an m-accretive operator. Recall that this means that $H$ is closed and that $\{\lambda \in \CC\mid \Re \lambda < 0 \} \subset \rho (H)$ and $\|(H - \lambda)^{-1}\| \leq 1/|\Re \lambda|$ for $\Re \lambda < 0$. In this paper we contribute to these results with showing the non-triviality of the pseudospectrum of $H$ and demonstrating its several consequences. The main results are summarised in the following theorem.
\begin{thm} \label{vetavet}
Let $H$ be the operator defined in \eqref{Ham}. Then:
\begin{enumerate}
\item The eigenfunctions of $H$ form a complete set in $L^2(\RR)$.
\item The eigenfunctions of $H$ do not form a (Schauder) basis in $L^2(\RR)$.
\item For any $\delta >0$ there exist constants $A, B >0$ such that for all $\eps>0$ small,
\beq
\left\{ \lambda\in\CC\,\left|\, |\lambda| > A,\, |\arg \lambda| < \arctan \Re \lambda - \delta,\, |\lambda| \geq B \left( \log\frac{1}{\eps}\right)^{6/5}\right.\right\} \subset \sigma_{\eps}(H).
\eeq
\item $H$ is not similar to a self-adjoint operator via bounded and boundedly invertible transformation
\item $H$ is not quasi-self-adjoint with a bounded and boundedly invertible metric.
\item $-\ii H$ is not a generator of a bounded semigroup.
\end{enumerate}
\end{thm}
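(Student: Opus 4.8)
The plan is to prove all six statements, with the analytic heart being item~3 (the pseudospectral lower bound); items 1,2,4,5,6 will follow either from semiclassical machinery or from soft functional-analytic arguments once item~3 is in hand. Let me sketch each.

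For item~3, the strategy is the standard Davies--Dencker--Sj\"ostrand--Zworski semiclassical approach to pseudospectra of non-self-adjoint operators. First I would introduce a semiclassical scaling: substituting $x \mapsto h^{-\alpha} x$ for a suitable $\alpha$ converts $H$ into $h^{-2}$ times a semiclassical operator $P_h = -h^2 \dd^2/\dd x^2 + V(x)$ (plus lower-order corrections), with the small parameter $h$ linked to $|\lambda|$ via $|\lambda| \sim h^{-6/5}$; this explains the exponent $6/5$ in the statement. The symbol is $p(x,\xi) = \xi^2 + x^2 + \ii x^3$ (after scaling, the dominant part). The key criterion is Dencker--Sj\"ostrand--Zworski: if at a point $(x_0,\xi_0)$ one has $p(x_0,\xi_0)=\lambda$ and the Poisson bracket condition $\{\Re p, \Im p\}(x_0,\xi_0) \ne 0$ holds --- more precisely, if $\tfrac{1}{\ii}\{p,\bar p\}$ has the correct sign --- then $\lambda$ is a pseudoeigenvalue, i.e.\ there exists a quasimode $\psi_h$ with $\|(P_h - \lambda)\psi_h\| \le C_N h^N \|\psi_h\|$ for all $N$, giving an exponentially small resolvent bound $\|(H-\lambda)^{-1}\| \gtrsim \ee^{c/h} \sim \ee^{c|\lambda|^{5/6}}$. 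Inverting this to solve $\ee^{-c|\lambda|^{5/6}} < \eps$ yields precisely the constraint $|\lambda| \ge B(\log(1/\eps))^{6/5}$. The region $|\arg\lambda| < \arctan\Re\lambda - \delta$ is exactly the set of $\lambda$ in the range of the symbol where the bracket condition holds with the right sign, so the geometric task is to compute the numerical range of $p$ and verify the bracket nondegeneracy there.

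The main obstacle, and where I would spend the most care, is constructing the WKB quasimode uniformly and controlling it rigorously: one builds $\psi_h = \ee^{\ii \phi(x)/h} \sum_k a_k(x) h^k$ with $\phi$ solving an eikonal equation $\phi'(x)^2 + x^2 + \ii x^3 = \lambda$ so that $\Im\phi$ is chosen to make $\psi_h$ genuinely $L^2$-normalizable and exponentially localized near the critical point; the transport equations then determine the amplitudes $a_k$. The delicate points are (i) ensuring $\Im\phi$ has the correct convexity/sign so the Gaussian-type localization is valid (this is where the bracket condition enters), (ii) handling the non-polynomial scaling corrections and showing they do not spoil the $O(h^\infty)$ error, and (iii) converting the scaled estimate back to the original unscaled operator $H$, tracking how $h$, the scaling exponent, and $|\lambda|$ interrelate to recover the stated $6/5$ power and the admissible angular sector. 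I would lean on the detailed computations in \cite{KrejcirikViola, Krejcirik12} for the cubic oscillator $-\dd^2/\dd x^2 + \ii x^3$ and adapt them, the new feature being the extra $x^2$ term, which only modifies lower-order symbol data and the precise shape of the sector but not the method.

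With item~3 established, the remaining items follow quickly. Item~4: a theorem (see \cite{KrejcirikViola, davies99}) states that if $H$ were similar to a self-adjoint operator via a bounded, boundedly invertible $\Omega$, then its pseudospectra would be trivial, namely $\sigma_\eps(H) \subset \{\lambda : \dist(\lambda,\sigma(H)) \le C\eps\}$ for a fixed constant $C=\|\Omega\|\,\|\Omega^{-1}\|$; but item~3 exhibits $\eps$-pseudoeigenvalues at distance $\sim(\log(1/\eps))^{6/5} \to \infty$ from the real spectrum, a contradiction. Item~5 is equivalent to item~4 by the equivalence \eqref{quasiH}$\Leftrightarrow$\eqref{similarity} recalled in the introduction (via $\Theta = \Omega^*\Omega$), so it needs no separate argument. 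Item~6: if $-\ii H$ generated a bounded semigroup, then $H$ would be similar to a self-adjoint operator (a bounded group would conjugate $H$ to self-adjoint form), or more directly a bounded semigroup forces a resolvent bound $\|(H-\lambda)^{-1}\| \le M/\dist(\lambda,\sigma(H))$ on a half-plane, again contradicting the wild resolvent growth from item~3. Finally, items 1 and 2 are of a different flavor: item~1 (completeness of eigenfunctions) follows from the compactness of the resolvent together with an estimate on the resolvent growth along rays showing $H$ belongs to a suitable Schatten class / satisfies the hypotheses of a known completeness criterion (as carried out for the cubic case in \cite{Krejcirik12}), and item~2 (failure of the Schauder basis property) is the expected consequence of the lack of a bounded metric established in item~5, since a Riesz-type basis of eigenfunctions would again force similarity to a normal operator with trivial pseudospectrum. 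I expect items 1 and 2 to reuse verbatim the arguments of \cite{Krejcirik12, KrejcirikViola}, the harmonic term being a harmless lower-order perturbation for these purposes.
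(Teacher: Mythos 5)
Your treatment of items 1, 3, 4, 5 and 6 matches the paper's route in all essentials: the scaling $(U\psi)(x)=\tau^{1/2}\psi(\tau x)$ giving $UHU^{-1}=\tau^{3}H_h$ with $h=\tau^{-5/2}$ and $|\lambda|=\tau^{3}=h^{-6/5}$, a Davies-type WKB quasimode for the resulting semiclassical operator (note that after scaling the harmonic term becomes the $h$-dependent perturbation $h^{2/5}x^{2}$, which is why the paper proves a version of Davies' theorem for potentials $V_h=V_0+\widetilde V(\cdot,h)$ rather than quoting the classical statement directly --- your point (ii) is exactly this), and the standard deductions for items 4, 5 and 6 from the resulting exponential resolvent growth. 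Completeness is likewise obtained from the trace-class resolvent and a Gohberg-type completeness criterion, as you anticipate. (Your first suggestion for item 6, that a bounded semigroup would conjugate $H$ to self-adjoint form, is not correct --- that requires a bounded \emph{group} --- but your alternative resolvent-bound argument is the one the paper uses.)

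The genuine gap is item 2. You propose to deduce it from item 5 on the grounds that ``a Riesz-type basis of eigenfunctions would force similarity to a normal operator.'' That argument excludes only a \emph{Riesz} basis, whereas the claim concerns a \emph{Schauder} basis, which is strictly weaker: a Schauder basis need not be the image of an orthonormal basis under a bounded and boundedly invertible map, so the absence of a bounded metric does not rule it out. What does rule it out is the behaviour of the spectral projections: if the eigenfunctions (with simple eigenvalues) formed a Schauder basis, the partial-sum operators would be uniformly bounded by the uniform boundedness principle, hence so would the individual eigenprojections $P_k$. The paper closes this step by invoking Davies' theorem on wild spectral behaviour of anharmonic oscillators, which converts the exponential growth of $\|(H-\lambda)^{-1}\|$ from item 3 into the statement that $\|P_k\|$ admits no polynomial bound $a k^{\alpha}$; this projection-norm blow-up is the ingredient your argument is missing, and it cannot be replaced by the similarity obstruction alone.
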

We can see that for any $\eps$ the pseudospectrum contains complex points with positive real part, non-negative imaginary part and large magnitude. This result is in particular important in view of the characterisation of pseudospectrum \eqref{smallpert} -- it implies the existence of pseudomodes very far from the spectrum. This non-trivial behaviour of the pseudospectrum was without details announced in \cite{Krejcirik12}. A numerical computation of several of the pseudospectral lines of $H$ can be seen in Figure \ref{numerika}. As a consequence of the last point in Theorem \ref{vetavet}, the time-dependent Schr\"{o}dinger equation with $H$ does not admit a bounded time-evolution. For more details about establishing a time-evolution of an unbounded non-self-adjoint operator we refer to recent papers \cite{joe1, joe2} and to references therein.
\begin{figure}[ht]
\centering
\includegraphics[height=3in]{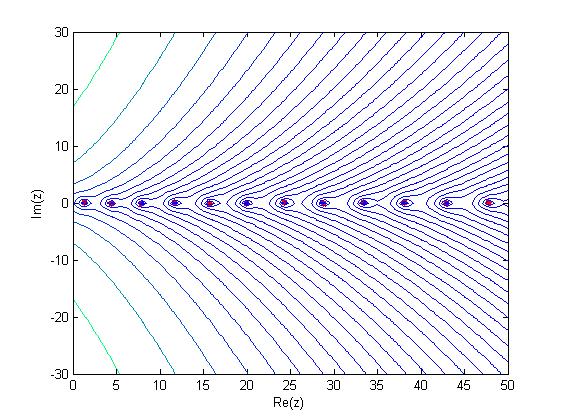}
\caption[Pseudospectrum]{Spectrum (red dots) and $\eps$-pseudospectra (enclosed by blue-green contour lines) of harmonic oscillator with imaginary cubic potential. The border of the $\eps$-pseudospectrum is plotted for the values $\eps = 10^{-7}, 10^{-6.75}, 10^{-6.5}, \dots, 10^1$, the green contour lines correspond to large values of $\eps$, the blue ones correspond to smaller values of $\eps$. We notice that for each $\eps$ from the selected range the contour lines quickly diverge and therefore the corresponding $\eps$-pseudospectrum contains points very far from the real axes. More details about the used computational method can be found in \cite{trefethenMATLAB}.}
\label{numerika}
\end{figure}
\paragraph{}This paper is organised as follows. In Section \ref{sekcedva} we formulate some properties of pseudospectrum to emphasize its importance in the study of non-self-adjoint operators. In Section \ref{sekcetech} we develop a semiclassical technique applicable in the study of pseudospectrum of the present model. The proof of the main theorem of the paper about pseudospectrum and eigenfunctions of $H$ can be found in Section \ref{sekcetri}. The Section \ref{sekcectyri} is devoted to a discussion of the results and of their consequences.
%
\section{General aspects of the pseudospectrum} \label{sekcedva}
%
The definition of the pseudospectrum and some of its most prominent properties are presented in this section. The focus is on properties related to this paper, which were already highlighted in \cite{KrejcirikViola}, where the authors dealt with similar problems. The presented list is far from complete and we refer to the monographs \cite{Davies07, trefethen} for more details on this subject.
\paragraph{}Let $H$ be a closed densely defined operator on a complex Hilbert space $\hilb$. Its spectrum $\sigma(H)$ is defined as the set of complex points $\lambda$ for which the operator $(H-\lambda)^{-1}$ does not exist or is not bounded on $\hilb$. The complement of this set in $\CC$ is called the resolvent set of $H$. It is a well known fact that the spectrum of a bounded linear operator is contained in the closure of its numerical range $\Theta(H) = \left\{(\psi,H\psi)\mid \psi \in \hilb, \|\psi\| = 1 \right\}$. Moreover, this holds for closed unbounded operators as well, provided the exterior of the numerical range in $\CC$ is a connected set and has a non-empty intersection with the resolvent set of $H$. The $\varepsilon$-pseudospectrum (or simply pseudospectrum) of $H$ is defined as
\beq
\sigma_{\varepsilon}(H) := \left\{ \lambda \in \CC \,\left|\, \left\|(H-\lambda)^{-1}\right\|> \varepsilon^{-1}\right.\right\},
\eeq
with the convention that $\left\|(H-\lambda)^{-1}\right\| = +\infty$ for $\lambda \in \sigma(H)$. In other words, $\sigma(H) \subset \sigma_{\varepsilon}(H)$ for every $\varepsilon$ from the definition and from the inequality $\left\|(H-\lambda)^{-1}\right\| \geq \mathrm{dist}\left(\lambda,\sigma(H)\right)^{-1}$ we can easily see that also an $\varepsilon$-neighbourhood of the spectrum is contained in the pseudospectrum. Similarly as in the previous case, if the exterior of the numerical range in $\CC$ is a connected set and has a non-empty intersection with the resolvent set of $H$, the pseudospectrum is in turn contained in the $\varepsilon$-neighbourhood of the numerical range, i.e.\ altogether we have
\beq
\left\{\lambda \in \CC \,\left|\, \mathrm{dist}(\lambda,\sigma(H))< \eps \right.\right\} \subset \sigma_{\varepsilon}(H) \subset \left\{\lambda \in \CC \,\left|\, \mathrm{dist}(\lambda,\overline{\Theta(H)})< \eps \right.\right\}.
\eeq
\paragraph{}Perhaps the most striking property of pseudospectrum is provided by result sometimes known as Roch-Silberman theorem \cite{roch}. The $\eps$-pseudospectrum of $H$ may be expressed via the spectra of all perturbations of $H$ of size less than $\eps$:
\beq \label{smallpert}
\sigma_{\eps}(H) = \bigcup_{\|V\|<\eps} \sigma(H + V).
\eeq
This result is especially important in the study of non-self-adjoint operators. For operators with highly non-trivial pseudospectrum (i.e.\ not contained in some bounded neighbourhood of the spectrum) it reveals their spectral instability with respect to small perturbations. It also shows a difficulty in the numerical study of operators with wild pseudospectra -- small rounding errors can lead to computing (false) eigenvalues, which are in fact very far from the true spectrum.
\paragraph{}The pseudospectrum can also by characterised as the set of all points of the spectrum and of all pseudoeigenvalues (or approximate eigenvalues), i.e.
\beq \label{pseudomode}
\sigma_{\varepsilon}(H)= \left\{\lambda \in \CC \mid \lambda \in \sigma(H) \vee \left(\exists \psi \in \Dom(H)\right)\left( \left\|(H-\lambda)\psi\right\|< \eps \|\psi\| \right)\right\}.
\eeq
Any $\psi$ satisfying the inequality in \eqref{pseudomode} is called a pseudoeigenvector (or pseudomode). It can be easily seen that pseudoeigenvalues can be turned into eigenvalues by a small perturbation. If $H$ were to represent a physical observable and $V$ its perturbation, this fact would cause some highly unintuitive behaviour of its energies.
%
\section{Semiclassical techniques} \label{sekcetech}
%
The use of semiclassical techniques in the study of non-self-adjoint operators was first suggested in \cite{davies99}, and the idea was further developed e.g. in \cite{Dencker, zworski-remark}. Let $H_h$ be an operator acting in $L^2(\RR)$ of the form
\beq
H_h := -h^2 \frac{\dd^2}{\dd x^2} + V_h(x).
\eeq
Here $V_h$ are analytic potentials in $x$ for all $h>0$ small enough which take the form $V_h(x) = V_0(x) + \widetilde V(x,h)$, where $\widetilde V(x,h) \rightarrow 0$ locally uniformly as $h \rightarrow 0$. This operator should be understood as some closed extension of an operator originally defined on $C_c^{\infty}(\RR)$. The following theorem is an analogue of \cite[Thm.~1]{davies99} for a potential depending on $h$. We postpone its proof into the following section.
\begin{thm} \label{veta}
Let $H_h$ be defined as above and let $\lambda$ be from the set
\begin{equation} \label{semiclass.pseudospec}
  \Lambda :=
  \left\{
  \xi^2 + V_h(x) \,\left|\, (x,\xi) \in \RR^{2}, \xi \,\Im V_h'(x) < 0
  \right.\right\},
\end{equation}
where the dash denotes standard differentiation with respect to $x$ in $\RR$. Then there exists some $C = C(\lambda) > 1$, some $h_0 = h_0(\lambda)>0$, and an $h$-dependent family of $C_c^{\infty}(\RR)$ functions $\{\psi_h\}_{0<h\leq h_0}$ with the property that, for all $0 < h \leq h_0$,
\beq \label{odhad2}
\left\|(H_h-\lambda)\psi_h\right\| < C^{-1/h}\|\psi_h\|.
\eeq
\end{thm}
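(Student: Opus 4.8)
The plan is to construct $\psi_h$ as a complex WKB (Gaussian) quasimode concentrated at a single phase-space point, following the strategy of \cite{davies99}, but exploiting the assumed analyticity of $V_h$ to upgrade the error from the usual $O(h^\infty)$ to the exponential bound $C^{-1/h}$ in \eqref{odhad2}. First I would reduce to a local problem. Membership $\lambda \in \Lambda$ supplies a \emph{real} point $(x_0,\xi_0)\in\RR^2$ with $\xi_0^2+V_h(x_0)=\lambda$ and $\xi_0\,\Im V_h'(x_0)<0$; in particular $\xi_0\neq 0$. I would then solve the eikonal equation $(\phi')^2+V_h-\lambda=0$ near $x_0$ by selecting the analytic branch of $\sqrt{\lambda-V_h}$ with $\phi'(x_0)=\xi_0$ (possible since $\lambda-V_h(x_0)=\xi_0^2\neq 0$), and integrate to obtain an analytic phase $\phi$ with $\phi(x_0)=0$.

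The decisive localisation comes from the sign condition. Differentiating the eikonal equation gives $\phi''(x_0)=-V_h'(x_0)/(2\xi_0)$, whence $(\Im\phi)'(x_0)=\Im(\xi_0)=0$ and $(\Im\phi)''(x_0)=-\Im V_h'(x_0)/(2\xi_0)>0$. Thus $\Im\phi$ has a nondegenerate minimum at $x_0$, so $\Im\phi(x)\geq c(x-x_0)^2$ on a fixed neighbourhood and $|\ee^{\ii\phi/h}|=\ee^{-\Im\phi/h}$ is a sharply peaked Gaussian. Writing $\psi_h=\ee^{\ii\phi/h}a$ and expanding $(H_h-\lambda)\psi_h$, the eikonal equation kills the $h^0$ term and leaves the residual $-\ii h(2\phi'a'+\phi''a)-h^2a''$. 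I would solve the transport hierarchy $2\phi'a_0'+\phi''a_0=0$ (giving $a_0=(\phi')^{-1/2}$, nonvanishing at $x_0$) and $2\phi'a_j'+\phi''a_j=\ii a_{j-1}''$, each analytic near $x_0$; a short computation shows that the truncated amplitude $a^{(N)}=\sum_{j<N}a_j h^j$ makes the residual telescope to exactly $-h^{N+1}a_{N-1}''$.

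The main obstacle, and the only step that genuinely uses analyticity, is the optimal truncation. By Cauchy estimates applied to the transport recursion on a fixed complex neighbourhood of $x_0$ -- where $V_h$ is analytic with bounds uniform in $h$ thanks to $\widetilde V\to 0$ locally uniformly -- the coefficients obey $|a_j|\leq C_0 C_1^{\,j}\,j!$. Hence the residual is bounded by $O(h^{N+1}C_1^{\,N}N!)$, and choosing $N\sim (e C_1 h)^{-1}$ and invoking Stirling's formula converts this factorial divergence into a genuine exponentially small bound $\ee^{-c/h}$, rather than the merely polynomial $O(h^\infty)$ available in the smooth case. Carrying the $h$-dependence of $V_h$ uniformly through these Cauchy estimates (so that $C_0,C_1,c$ do not depend on $h$) is the delicate point, and it is exactly where the uniform convergence $\widetilde V\to 0$ is invoked.

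Finally I would cut off and normalise. Multiplying by $\chi((x-x_0)/\rho)$ with $\chi\equiv 1$ near $x_0$ confines $\psi_h$ to the good neighbourhood; the commutator terms produced by $\chi'$ are supported where $\Im\phi\geq c\rho^2$, hence carry an additional factor $\ee^{-c\rho^2/h}$ and are also exponentially small. A Laplace (stationary-phase) evaluation of $\|\psi_h\|^2=\int \ee^{-2\Im\phi/h}\,|a^{(N)}\chi|^2\,\dd x$ gives $\|\psi_h\|\sim h^{1/4}$, which is only polynomially small. Dividing the exponentially small numerator by this polynomial norm therefore preserves an estimate of the form $C^{-1/h}\|\psi_h\|$ after possibly shrinking $C>1$ and $h_0$, which is precisely \eqref{odhad2}.
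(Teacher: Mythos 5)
Your proposal is correct and follows essentially the same route as the paper: an analytic WKB ansatz with the eikonal branch fixed by the sign condition $\xi_0\,\Im V_h'(x_0)<0$ (so that $\Im\phi$ has a nondegenerate minimum at $x_0$), Cauchy estimates on a fixed complex neighbourhood giving factorial growth of the transport coefficients, optimal truncation at $N\sim(\ee C_1 h)^{-1}$ to turn $O(h^\infty)$ into $\ee^{-c/h}$, a cutoff whose commutator terms are exponentially small, and a lower bound $\|\psi_h\|\gtrsim h^{1/4}$. The only cosmetic differences are your use of $j!$ in place of the paper's $j^j$ and your explicit mention (rather than the paper's implicit-function-theorem device) of how the $h$-dependence of $V_h$ is carried uniformly through the estimates.
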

The function $f(x,\xi):= \xi^2 + V_h(x)$ is called the symbol associated with $H_h$. Note that relation \eqref{pseudomode} gives us that $\lambda \in \sigma_{\varepsilon}(H_h)$ for all $\eps \geq C(\lambda)^{-1/h}$. Here $\eps$ can get arbitrarily close to $0$, provided $h$ is sufficiently small. The closure of $\Lambda$ is usually called the semiclassical pseudospectrum \cite{Dencker}. Application of Theorem \ref{veta} to non-semiclassical operators is sometimes possible by using scaling techniques and sending the spectral parameter to infinity. This is based on a more general principle that the semiclassical limit is equivalent to the high-energy limit after a change of variables.
\begin{proof}
The proof is inspired by the proof of \cite[Thm.~1]{KrejcirikViola}. We are interested in the case when $h$ is very close to $0$, during the course of the proof we are not going to stress every occasion when this plays a role. We can assume $h$ to be ``sufficiently small'' when necessary. Let $\lambda = \xi_0^2 + V_h(x_0)$ and assume $\xi_0 \neq 0, \Im V_h'(x_0) \neq 0$. Let us notice that $\lambda$ is dependent on $h$ from definition so changing $h$ in the course of our proof would mean changing $\lambda$ as well. This problem can be overcome by fixing $\lambda \in \Lambda$ and introduce a dependence of $x$ and $\xi$ on $h$ in such a way that $\lambda = \tilde\xi(h)^2 + V_h(\tilde x(h))$, where $\tilde\xi(h) \rightarrow \xi_0$ and $\tilde x(h) \rightarrow x_0$ as $h \rightarrow 0$. The existence of these functions is ensured by the implicit function theorem. Since we only need to find one function for which \eqref{odhad2} holds, the main idea is that the sought pseudomode will arise from JWKB approximation of the solution to $(H_h-\lambda)u = 0$ which takes the form
\beq
u(x,h):= \ee^{\ii \phi(x,h)/h} \sum_{j=0}^{N(h)} h^j a_j(x,h),
\eeq
where $a_j(x,h)$ are functions analytic near $x_0$. We follow here the procedure of constructing appropriate functions $\phi$ and $a_j$ as shown e.g.\ in \cite[Chap. 2]{dimassi}. The function $\phi$ should satisfy the eikonal equation
\beq
f(x,\phi'(x,h)) - \lambda =0,
\eeq
where $f$ is the symbol associated with $H_h$. (The dash denotes differentiation with respect to $x$.) From this equation immediately follows that $\phi'(x,h) = \pm \sqrt{\lambda - V_h(x)}$. The sign is determined by the condition $\xi \,\Im V_h'(x) < 0$ applied in the point $(x_0,\phi'(x_0,h))$ and remains the same for all $h$. Therefore the sign of $\phi'(x_0,h)$ should be opposite of the sign of $\Im V_h'(x_0)$. Therefore we get
\beq \label{phi}
\phi(x,h) = - \mathrm{sgn}\left(\Im V_h'(x_0)\right) \int_{0}^{x} \sqrt{\lambda - V_h(y)} \,\dd y.
\eeq
We need to check whether $\phi'$ is analytic near $x_0$ for $h$ small. From the assumption we know that $\Im V_h'(x_0) \neq 0$, so there exists $\delta > 0$ such that $\Im V_h'(\tilde x) \neq 0$ for $\tilde x \in [x_0-\delta,x_0+\delta]$. Then for every $\tilde x = x_0 + \varepsilon(h)$, where $0 < |\varepsilon| < \delta$ and $\eps(h) \rightarrow 0$ as $h \rightarrow 0$, we get
\beq
\Im V_h(\tilde x) - \Im \lambda = \Im V_h(\tilde x) - \Im V_0(x_0) = \varepsilon(h) \left( \Im V_0'(x_0) + \mathcal{O}(\varepsilon(h))\right) + \widetilde V(\tilde x,h)
\eeq
for $\eps$ going to $0$. Without loss of generality it is possible to assume $\Im V_h'(x_0) > 0$, therefore $\delta$ can be fixed so that $\Im V_0'(x_0) + \mathcal{O}(\varepsilon) > C'$ for some $C'>0$. Taking $h$ small, $\widetilde V(\tilde x,h)$ gets close to $0$ uniformly and $|\tilde x - x_0| < \delta$, thus $\Im V_h(\tilde x) - \Im \lambda > 0$ and consequently the square root in the definition of $\phi'$ is well-defined. The case $\Im V_h'(x_0) < 0$ is proven in the same manner. After a translation we can assume further on $x_0 = 0$.
\paragraph{}The equality
\beq
\ee^{-\ii \phi/h}\left(H_h - \lambda \right) \ee^{\ii \phi /h} = \frac{2 h}{\ii} \left( \phi' \frac{\dd}{\dd x} + \frac{1}{2}\phi''\right) - h^2 \frac{\dd^2}{\dd x^2}
\eeq
can be verified with a direct computation. If we set $a_j$ so that they satisfy the transport equations
\beq \label{prvni}
\bal
\phi'(x,h) a_0'(x, h) + \frac{1}{2} \phi''(x,h) a_0(x,h) &= 0, \\
\phi'(x,h) a_j'(x, h) + \frac{1}{2} \phi''(x,h) a_j(x,h) &= \frac{\ii}{2} a_{j-1}''(x,h)
\eal
\eeq
for $j > 0$, we get that 
\beq \label{pouzit}
\ee^{-\ii \phi(x,h)/h}\left(H_h - \lambda \right) \ee^{\ii \phi(x,h) /h}\left(\sum_{j=0}^{N} h^j a_j(x,h)\right) = -h^{N+2} a_N''(x,h).
\eeq
We can also set $a_0(x_0,h) = 1$ and $a_j(x_0,h) = 0$ for $j > 0$ and all $h$. The equations \eqref{prvni} can be then solved using the method of integrating factor as
\beq
\bal \label{aj}
a_0(x,h) &= \frac{\sqrt{\phi'(x_0,h)}}{\sqrt{\phi'(x,h)}},\\
a_j(x,h) &= \frac{1}{\sqrt{\phi'(x_0,h)}}\int_0^x \frac{\ii\, a_j''(y,h)}{2 \sqrt{\phi'(y,h)}} \,\dd y.
\eal
\eeq
These functions are well defined and analytic near $x_0$ thanks to analyticity of $\phi'$. We now proceed with estimates of the functions $a_j$. Note that since the potentials $V_h(x)$ are analytic, we can naturally extend them into the complex plane in the neighbourhood of $x_0 = 0$ and thus the same can be applied on $\phi$ and all $a_j$. Our goal is to arrive to the estimate 
\beq \label{estimate}
|a_j(x,h)| \leq C_1^{j+1} j^j
\eeq
for $C_1 > 0$ and  $x$ in some neighbourhood of the origin. Then we will be able to define the $h$-dependent function
\beq \label{fa}
a(x,h) := \sum_{0\leq j \leq (\ee C_1 h)^{-1}} h^j a_j(x,h),
\eeq
which is uniformly bounded analytic function on the set where \eqref{estimate} holds due to the absolute summability of the sum
\beq \label{suma}
|a(x,h)|\leq C_1\sum_{0\leq j \leq (\ee C_1 h)^{-1}} C_1^j h^j j^j \leq C_1 \sum_{0\leq j \leq (\ee C_1 h)^{-1}} \ee^{-j} < +\infty.
\eeq
In the following we will derive the estimate \eqref{estimate} for $a_j$ extended to the complex plane (further denoted as $a_j(z,h)$). With the natural choice of the norm
\beq
\|f\|_{B(R)} := \sup\left\{z \in B(R) \mid |z| < R\right\},
\eeq
where $B(R)$ is an open ball in the complex plane with center at $0$ and diameter $R$, we easily see that the estimate obtained for $\|a\|_{B(R)}$ will remained valid for $|a(x,h)|$ in some neighbourhood of the origin. We fix $R_0$ such that, on $B(R_0)$,  $\phi$ is analytic, $|\phi'|$ is bounded from below and above and $\Im \phi''(x,h) > 1/C_2$ for some $C_2 > 0$. We also employ Cauchy's estimate for the second derivative of an analytic bounded function $f$ defined on $B(R)$:
\beq \label{cauchy}
|f''(z)|\leq \frac{2 \|f\|_{B(R)}}{\left(R - |z|\right)^2}.
\eeq
With the use of the formula \eqref{aj} we obtain
\beq\bal
|a_{j}(z,h)| &= \left|\frac{1}{\sqrt{\phi'(z,h)}}\int_0^z \frac{\ii a_{j-1}''(\zeta,h)}{2 \sqrt{\phi'(\zeta,h)}} \,\dd \zeta\right|\\
&\leq \|(\phi'(\cdot,h))^{-1}\|_{B(R)} \int_0^{|z|} \frac{\|a_{j-1}(\cdot,h)\|_{B(R)}}{(R-t)^2} \, \dd t\\
&=\|(\phi'(\cdot,h))^{-1}\|_{B(R)} \|a_{j-1}(\cdot,h)\|_{B(R)} \left( \frac{1}{R - |z|} - \frac{1}{R}\right)\\
&= \frac{|z|}{R(R - |z|)}\|(\phi'(\cdot,h))^{-1}\|_{B(R)} \|a_{j-1}(\cdot,h)\|_{B(R)}
\eal\eeq
for $j = 0, 1, \dots$. We iterate these estimates on balls of radius $R_k := \left(1 - k/2j\right)R_0$, $k = 0, \dots, j-1$. Then we have for $|z| < R_{j}$
\beq
\frac{|z|}{R_k(R_k - |z|)} \leq \frac{|z|}{R_k(R_k - R_{k+1})} \leq \frac{4 j |z|}{R_0^2}.
\eeq 
Then it follows for $a_{k+1}$ that 
\beq
|a_{k+1}(z,h)| \leq \frac{4 j |z|}{R_0^2} \|(\phi'(\cdot,h))^{-1}\|_{B(R_0)} \|a_j(\cdot,h)\|_{B(R)}.
\eeq
Subsequently using these estimates for $k = 0, \dots, j-1$ and taking a supremum we obtain
\beq
\|a_j(\cdot,h)\|_{B(R_0/2)} \leq \|a_j(\cdot,h)\|_{B(R_j)} \leq \|a_0(\cdot,h)\|_{B(R_0)} \left(\frac{2 j}{R_0} \|(\phi'(\cdot,h))^{-1}\|_{B(R_0)}\right)^j.
\eeq
We see from \eqref{aj} and our choice of $R_0$ that $\|a_0(\cdot,h)\|_B(R_0) < C_3$ and from the uniform estimate of $|\phi(x,h)|$ from below that $\|(\phi'(\cdot,h))^{-1}\|_{B(R_0)} < C_4$, where the positive constants $C_3$ and $C_4$ does not depend on $h$. The desired estimate \eqref{estimate} then follows with the constant
\beq
C_1 := \max\left\{ C_3, \frac{2}{R_0} C_4\right\}.
\eeq

\paragraph{}We are now able to define the desired pseudomode as 
\beq \label{psi}
\psi_h(x) := \ee^{\ii \phi(x,h)/h}\chi(x) a(x,h),
\eeq
where $a(x,h)$ is the function defined in \eqref{fa} and $\chi \in C_c^{\infty}(\RR)$ such that it is identically equal to $1$ in some neighbourhood of $0$ and its support lies inside the interval $(-R_0/2, R_0/2)$. We divide the calculation of the norm in \eqref{odhad2} as follows:
\beq \label{summand}
\left\|(H_h-\lambda)\psi_h\right\| = \left\|\chi(H_h-\lambda)\ee^{\ii \phi/h} a \right\| + \left\|\left[H_h-\lambda,\chi\right]\ee^{\ii \phi/h} a\right\|.
\eeq
First we focus on the first summand. Since $\phi(0,h) = 0$, $\phi'(0, h)$ is real and $\Im \phi''(x, h) > 1/C_2$ holds, we have 
\beq \label{minusexp}
\left|\ee^{\ii \phi(x,h)/h}\right|\leq \exp\left(-\frac{x^2}{2 C_2 h}\right)
\eeq
for all $x \in \mathrm{supp} \chi$. Since $\left|\ee^{\ii \phi/h}\right| > 1$ on $\mathrm{supp} \chi$, we can use \eqref{pouzit} to estimate
\beq
\left\|\chi(H_h-\lambda)\ee^{\ii \phi/h} a \right\| \leq \left\|\chi \ee^{\ii \phi/h}(H_h-\lambda)\ee^{\ii \phi/h} a \right\| = \|h^{N+2}a_N''\chi\|,
\eeq
where $N = N(h) = \lfloor (\ee C_1 h)^{-1}\rfloor$. (Here $\lfloor x \rfloor$ denotes the floor function.) Using the estimate from \eqref{estimate} and the Cauchy's estimate \eqref{cauchy} we obtain for all $x \in \mathrm{supp} \chi$ that $|h^{N+2}a_N''(x,h)|\leq C \ee^{-1/(Ch)}$ for $C > 0$ independent of $h$. From this the estimate
\beq \label{pozdeji}
\left\|\chi(H_h-\lambda)\ee^{\ii \phi/h} a \right\| \leq C \ee^{-1/(Ch)}
\eeq
follows. To estimate the second summand in \eqref{summand} we directly calculate
\beq \label{treti}
\left[H_h-\lambda,\chi\right]\ee^{\ii \phi/h} a = -h^2 \ee^{\ii \phi/h} \left(\chi''a + 2\chi'\left(a' + \frac{\ii}{h}\phi'a \right)\right).
\eeq
Using \eqref{suma} we have uniform bounds on $a$ and thus on $a'$ after the use of the Cauchy's estimate \eqref{cauchy}, $\phi'$ is bounded by the choice of $R_0$, $\chi'$ and $\chi''$ are identically equal to $0$ on $\mathrm{supp} \chi$ and $\ee^{\ii \phi/h}$ is again bounded by \eqref{minusexp} we see that \eqref{treti} is in fact equal to $0$ on the neighborhood of $0$, where $\chi$ is constant.
\paragraph{}To complete the proof, it remains to show that $\psi_h$ defined in \eqref{psi} is not exponentially small. Since we have established the estimate \eqref{estimate} for $|x| < R_0/2$ and $0 \leq j \leq N = (\ee C_1 h)^{-1}$, we have the estimate
\beq
\left\|\sum_{j=1}^N h^j a_j(x,h)\right\|_{B(r)} \leq C r
\eeq
for $0 < r \leq r_0$, where $r_0$ is sufficiently small. We can take $r$ very small and fixed, so because $a_0(x,h)$ is close to $1$ and $\Im \phi(x,h)$ is close to $\Im \phi''(0,h)x^2/2$ for $x$ small, we obtain
\beq
\|u(\cdot,h)\|\geq\|u(\cdot,h)\|_{L^2((-r,r))}\geq \frac{1}{C}\left( \int_{-r}^r \exp\left(\frac{x^2}{C h}\right)\,\dd x\right)^{1/2}\geq \frac{1}{C} h^{1/4}.
\eeq
\end{proof}
%
\section{The proof of Theorem \ref{vetavet}} \label{sekcetri}
%
For the sake of clarity we choose to divide the proof into several lemmas.
\begin{lem}
The eigenfunctions of $H$ form a complete set in $L^2(\RR)$.
\end{lem}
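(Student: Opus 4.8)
The plan is to reduce the statement to a classical completeness criterion of Keldysh type for operators with compact resolvent: if $A$ is closed with compact resolvent, $(A-\lambda_0)^{-1}\in\mathcal{S}_p$ for some $p$ and some $\lambda_0\in\rho(A)$, and if finitely many rays from the origin divide $\CC$ into open sectors each of opening strictly less than $\pi/p$ along which $\|(A-\lambda)^{-1}\|\to 0$ as $|\lambda|\to\infty$, then the root subspaces of $A$ span the whole Hilbert space (the analogous argument is carried out in \cite{Krejcirik12}). The delicate feature here is that the pseudospectrum is wild near the positive real axis, which is exactly where $\sigma(H)$ lies, so no resolvent decay can be expected there; the rays must therefore be routed through the open left half-plane, and the sector containing $\RR^+$ will unavoidably have opening slightly larger than $\pi$. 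This forces a Schatten exponent $p<1$, so that $\pi/p>\pi$.

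Two ingredients then remain. First I would show that $(H+1)^{-1}\in\mathcal{S}_p$ for every $p>5/6$. To this end, introduce the self-adjoint comparison operator $G:=-\dd^2/\dd x^2+\langle x\rangle^3$ on its maximal domain. By the domain characterisation $\Dom(H)=\{\psi\in W^{2,2}(\RR)\mid x^3\psi\in L^2(\RR)\}$ recalled from \cite{caliceti1980}, every $\psi\in\Dom(H)$ has $\psi''\in L^2(\RR)$ and $\langle x\rangle^3\psi\in L^2(\RR)$, so $\Dom(H)\subseteq\Dom(G)$, and by the closed graph theorem $(G+1)(H+1)^{-1}$ is bounded. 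A semiclassical Weyl count for $G$ gives $N(E)\sim c\,E^{5/6}$, whence its eigenvalues grow like $n^{6/5}$ and $(G+1)^{-1}\in\mathcal{S}_p$ precisely for $p>5/6$. Writing $(H+1)^{-1}=(G+1)^{-1}\,(G+1)(H+1)^{-1}$ and invoking the singular value bound $s_n(CD)\le\|D\|\,s_n(C)$ transfers this membership to $(H+1)^{-1}$.

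Second, the resolvent bounds on the rays come directly from the m-accretivity recalled in the introduction, namely $\|(H-\lambda)^{-1}\|\le 1/|\Re\lambda|$ for $\Re\lambda<0$. On a ray $\arg\lambda=\pi/2+\delta$ (respectively $\arg\lambda=3\pi/2-\delta$) with $\delta>0$ small one has $|\Re\lambda|=|\lambda|\sin\delta$, so $\|(H-\lambda)^{-1}\|=\mathcal{O}(|\lambda|^{-1})$ there. These two rays split $\CC$ into a sector of opening $\pi-2\delta$ around the negative real axis and one of opening $\pi+2\delta$ around the positive real axis (the latter containing $\sigma(H)$). Fixing any $p\in(5/6,1)$ and then $\delta<\tfrac12(\pi/p-\pi)$ makes both openings strictly less than $\pi/p$, so the completeness criterion applies and the root subspaces of $H$ are complete in $L^2(\RR)$. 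Since the eigenvalues of $H$ are simple by \cite{shin02}, each root subspace is one-dimensional and spanned by an eigenfunction, and completeness of the eigenfunctions follows.

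The main obstacle is the first ingredient: pinning down the sharp Schatten exponent $5/6$ (equivalently the eigenvalue growth $n^{6/5}$ of the comparison operator) and, just as crucially, recognising that one must land strictly below trace class, since $p=1$ would only permit sectors of opening below $\pi$ and could not accommodate the sector of opening $\pi+2\delta$ imposed by the need to avoid the pseudospectrally wild positive real axis. Verifying the Weyl asymptotics for $G$ and the domain inclusion $\Dom(H)\subseteq\Dom(G)$ are the routine but essential computations underpinning the argument.
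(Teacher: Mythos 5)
Your argument is sound, but it follows a genuinely different route from the paper. The paper's proof is much shorter and rests on a Lidskii-type theorem rather than a Keldysh/ray criterion: since $H$ is m-accretive, the compact operator $(H+1)^{-1}$ is itself accretive; it is Hilbert--Schmidt by \cite{caliceti1980} and in fact trace class by \cite[Thm.~1.3]{Almog14}; completeness of the root vectors of an accretive trace-class compact operator is then given by \cite[Thm.~X.3.1]{Gohberg90}, and the spectral mapping theorem transfers the conclusion to $H$. Both proofs consume the same two inputs --- a Weyl-law Schatten estimate and m-accretivity --- but in different ways. The paper uses accretivity to confine the numerical range of the resolvent to a half-plane, which is why it can get away with the weaker exponent $p=1$; you use accretivity only to obtain resolvent decay $\mathcal{O}(|\lambda|^{-1})$ on two rays hugging the imaginary axis, and must therefore pay for the sector of opening $\pi+2\delta$ around $\RR^+$ with the strictly sub-trace-class exponent $p\in(5/6,1)$. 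Your derivation of the critical exponent $5/6$ (via the self-adjoint comparison operator $G=-\dd^2/\dd x^2+\langle x\rangle^3$, the inclusion $\Dom(H)\subseteq\Dom(G)$, the closed graph theorem, and the singular-value bound for the factorization $(H+1)^{-1}=(G+1)^{-1}\,(G+1)(H+1)^{-1}$) is correct and is essentially the content of the Schatten-class result the paper imports from \cite{Almog14}. Two small cautions: make sure the completeness criterion you invoke is stated in a version valid for $0<p<1$ (Dunford--Schwartz XI.9.31 or Agmon's completeness theorem do cover this range; some textbook formulations assume $p\ge 1$), and note that the argument in \cite{Krejcirik12} that you point to proceeds, like the present paper, through accretivity and trace-class membership rather than through rays, so that citation is slightly misplaced. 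What your approach buys in exchange for the sharper Schatten exponent is robustness: it would survive for perturbations whose numerical range is not confined to a half-plane, as long as resolvent decay on two suitable rays persists.
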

\begin{proof}
Let us first briefly recall that completeness of $\{\psi_k\}_{k=1}^{+\infty}$ means that the span of $\psi_k$ is dense in $L^2(\RR)$. Since $H$ is m-accretive, its resolvent is m-accretive as well. It is also a Hilbert-Schmidt operator \cite{caliceti1980} and the application of \cite[Thm.~1.3]{Almog14} yields that it is trace class as well. The completeness of its eigenfunctions follows from \cite[Thm.~X.3.1]{Gohberg90}. The completeness of eigenfunctions of $H$ then follows from the application of the spectral mapping theorem \cite[Thm.~IX.2.3]{EE}.
\end{proof}
\begin{lem}
For any $\delta >0$ there exist constants $C_1,C_2 >0$ such that for all $\eps>0$ small,
\beq
\left\{ \lambda\in\CC\,\left|\, |\lambda| > C_1,\, |\arg \lambda| < \frac{\pi}{2} - \delta,\, |\lambda| \geq C_2 \left( \log\frac{1}{\eps}\right)^{6/5}\right.\right\} \subset \sigma_{\eps}(H).
\eeq
\end{lem}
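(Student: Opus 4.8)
The plan is to reduce the non-semiclassical operator $H$ to a semiclassical one by a dilation and then invoke Theorem \ref{veta}, exploiting the principle quoted in the text that the high-energy limit is the semiclassical limit after rescaling. First I would introduce the unitary dilation $(U_s\psi)(x):=s^{1/2}\psi(sx)$ on $L^2(\RR)$ and compute, for $s>0$, the conjugate $U_s^{-1}HU_s = -s^2\,\dd^2/\dd x^2 + s^{-2}x^2 + \ii s^{-3}x^3$. Choosing $s=h^{2/5}$ and factoring out $s^{-3}=h^{-6/5}$ yields
\beq
U_s^{-1}HU_s = h^{-6/5}\,H_h,\qquad H_h:=-h^2\frac{\dd^2}{\dd x^2}+V_h(x),\quad V_h(x):=\ii x^3+h^{2/5}x^2 .
\eeq
Here $V_0(x)=\ii x^3$ and $\widetilde V(x,h)=h^{2/5}x^2\to 0$ locally uniformly as $h\to0$, so $H_h$ is exactly of the form covered by Theorem \ref{veta}.

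Next I would identify the set $\Lambda$ from \eqref{semiclass.pseudospec}. Since $\Im V_h'(x)=3x^2\ge0$, the sign condition $\xi\,\Im V_h'(x)<0$ forces $\xi<0$ and $x\ne0$, so $\mu=\xi^2+h^{2/5}x^2+\ii x^3$ ranges over $\{\Re\mu>h^{2/5}(\Im\mu)^{2/3},\ \Im\mu\ne0\}$, which exhausts the open right half-plane as $h\to0$. Thus for any $\mu$ on the unit circle with $0<|\arg\mu|\le \pi/2-\delta$ one has $\mu\in\Lambda$ once $h$ is small enough, and Theorem \ref{veta} produces $C(\mu)>1$, $h_0(\mu)>0$ and a pseudomode $\psi_h$ with $\|(H_h-\mu)\psi_h\|<C(\mu)^{-1/h}\|\psi_h\|$.

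Then I would transport the estimate back through $U_s$. Setting $\lambda:=h^{-6/5}\mu$ and $\Psi:=U_s\psi_h$, unitarity gives $\|(H-\lambda)\Psi\|=h^{-6/5}\|(H_h-\mu)\psi_h\|$; with $|\mu|=1$ we have $|\lambda|=h^{-6/5}$, i.e. $h=|\lambda|^{-5/6}$ and $\arg\lambda=\arg\mu$, whence
\beq
\|(H-\lambda)\Psi\|<|\lambda|\,\exp\!\left(-|\lambda|^{5/6}\log C(\mu)\right)\|\Psi\| .
\eeq
By the characterisation \eqref{pseudomode}, $\lambda\in\sigma_\eps(H)$ as soon as the right-hand factor is below $\eps$. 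Writing $c_0:=\inf\log C(\mu)>0$ over the relevant arc, the bound $|\lambda|\exp(-c_0|\lambda|^{5/6})<\eps$ holds once $\tfrac{c_0}{2}|\lambda|^{5/6}>\log(1/\eps)$, the prefactor $|\lambda|$ being absorbed for $|\lambda|$ large, i.e. once $|\lambda|\ge(2/c_0)^{6/5}(\log\tfrac1\eps)^{6/5}$. This is precisely where the exponent $6/5$ is produced; it fixes $C_2=(2/c_0)^{6/5}$, while $C_1$ is taken large enough that $h=|\lambda|^{-5/6}\le h_0$, that $\mu\in\Lambda$, and that $\log|\lambda|\le\tfrac{c_0}{2}|\lambda|^{5/6}$.

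The hard part is the uniformity of the constants over the whole sector, since Theorem \ref{veta} delivers $C(\mu)$ and $h_0(\mu)$ only pointwise. I would verify, by inspecting its proof, that the auxiliary quantities there ($R_0$, the Cauchy-estimate constant, and the lower bound $1/C_2$ on $\Im\phi''$) depend continuously on $\mu$ and hence admit uniform bounds on compact subsets of $\Lambda$; compactness of the unit arc then supplies a single $c_0>0$ and $h_0>0$. The genuinely delicate point is that the arc must avoid the degenerate directions: at $\arg\mu=\pm\pi/2$ one has $\xi_0\to0$ (excluded here by the $\delta$-margin), while at $\arg\mu=0$ one has $x_0\to0$ and $\Im V_h'(x_0)\to0$, so the pseudomode delocalizes and $\log C(\mu)\to0$ as $\mu$ approaches the positive real axis. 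To reach $\arg\lambda$ down to $0$ as the statement demands, I would first prove the (slightly stronger) bound placing the off-axis region in $\sigma_{\eps/2}(H)$ on a sub-sector bounded away from the real axis, and then recover the sliver around the positive real axis by a limiting argument using continuity of $\lambda\mapsto\|(H-\lambda)^{-1}\|$ on the resolvent set (together with the fact that the spectrum is itself real, so any real limit point is either an eigenvalue or a point at which the resolvent norm is the limit of the surrounding large values). Controlling this real-axis degeneration uniformly is where the bulk of the technical effort lies.
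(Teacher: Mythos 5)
Your argument is, in substance, the paper's own proof: the dilation $(U_s\psi)(x)=s^{1/2}\psi(sx)$ with $s=h^{2/5}$ yields exactly the semiclassical operator $H_h=-h^2\,\dd^2/\dd x^2+h^{2/5}x^2+\ii x^3$ with $H$ unitarily equivalent to $h^{-6/5}H_h$, Theorem \ref{veta} is applied to points $\mu$ on the unit circle, and the exponent $6/5$ is extracted by taking the logarithm of $|\lambda|^{-1}C^{|\lambda|^{5/6}}>\eps^{-1}$, which is precisely the computation around \eqref{odhad}. The identification of $\Lambda$, the transport of the pseudomode bound through the unitary, and the choice of $C_1$, $C_2$ all match the paper.

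The one substantive point is the behaviour near the positive real axis, which you correctly isolate and which the paper glosses over: since $\Im V_h'(x)=3x^2$ vanishes at $x=0$, real $\mu$ never belong to the set \eqref{semiclass.pseudospec}, and the rate constant degenerates ($\Im\phi''(x_0)=-\Im V_h'(x_0)/(2\xi_0)\to 0$, hence $C(\mu)\to 1$) as $\arg\mu\to 0$; the paper nevertheless applies Theorem \ref{veta} to the real points of its set $A_\delta$ as if they lay in $\Lambda$. You are more careful than the source here, but your proposed repair does not yet close the gap: continuity of $\lambda\mapsto\|(H-\lambda)^{-1}\|$ transfers a lower bound only to limit points of the region where it has already been proved (the rays $|\arg\lambda|=\delta'$), not to an open sliver $|\arg\lambda|<\delta'$ of directions never reached, and letting $\delta'\to 0$ destroys the uniform constant $c_0$. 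A clean fix is either a pseudomode construction quantitatively uniform as $x_0\to 0$, or restating the inclusion on sectors $\delta\le|\arg\lambda|\le\pi/2-\delta$ bounded away from the real axis --- which is all the scaling argument genuinely delivers, and which suffices for every consequence drawn from this lemma in Theorem \ref{vetavet}.
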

\begin{proof}
Using the unitary transformation
\beq
\left(U \psi \right)(x) := \tau^{1/2}\, \psi(\tau x ) 
\eeq
the semiclassical analogue of $H$ is introduced:
\beq
U H U^{-1} = \tau^3 H_{h},
\eeq
where 
\beq
H_{h} := - h^2 \frac{\mathrm{d}^2}{\mathrm{d}x^2} + h^{2/5}x^2 + \ii x^3
\eeq
and $h := \tau^{-5/2}$. For the set $\Lambda$ from Theorem \ref{veta} holds $\Lambda = \left\{ \lambda \in \CC \mid \Re \lambda > 0, |\arg \lambda| < \arctan \Re \lambda \right\}$. This theorem gives us that for any $\lambda \in \Lambda$, $n > 0$ and $h$ sufficiently small
\beq \label{odhad}
\left\|\left(H - \tau^3 \lambda\right)^{-1} \right\| = \tau^{-3} \left\|\left(H_h - \lambda\right)^{-1} \right\| > h^{6/5} C(\lambda)^{1/h}
\eeq
holds. Let us define the set
\beq
A_{\delta} = \left\{ \lambda\in \CC \,\left|\, |\lambda|=1, |\arg \lambda| < \arctan \Re \lambda - \delta\right.\right\}
\eeq
for any $\delta > 0$. Then we see from the inequality \eqref{odhad} that $\tau^3 A_{\delta} \subset \sigma_{\eps}(H)$ for every $\delta$ and every $\tau$ sufficiently large, in particular such that the inequality $\tau^{-3} C^{\tau^{5/2}} > \varepsilon^{-1}$ holds. We may then identify the points of $\Lambda$ in absolute value with $\tau^3$, i.e.\ $|\lambda| = \tau^3 = h^{-6/5}$. After we take logarithm of the lastly mentioned inequality and neglect the term $\log \tau^{-3}$ which is small compared to $\tau^{5/2}$ for $\tau$ large, the statement of the theorem follows after expressing the inequality in terms of $|\lambda|$.
\end{proof}
\begin{lem}
The eigenfunctions of $H$ do not form a (Schauder) basis in $L^2(\RR)$.
\end{lem}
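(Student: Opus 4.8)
The plan is to assume, for contradiction, that the eigenfunctions form a Schauder basis and to deduce from this a resolvent bound that confines the $\eps$-pseudospectrum to a thin horizontal strip about the real axis; this will clash with the preceding lemma, which places pseudoeigenvalues arbitrarily high above the real axis.

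First I would fix the ordering. Since the eigenvalues are real and simple \cite{shin02} and $H$ has compact resolvent, I can list them increasingly as $\lambda_1 < \lambda_2 < \cdots \to +\infty$, with rank-one Riesz eigenprojections $P_n$ and partial-sum operators $S_N := \sum_{n \leq N} P_n$. The defining property of a Schauder basis (in this ordering) is that $K := \sup_N \|S_N\| < \infty$. I would then expand the resolvent as $(H-\lambda)^{-1} = \sum_n (\lambda_n - \lambda)^{-1} P_n$ and control its norm by Abel summation: writing $c_n := (\lambda_n - \lambda)^{-1}$, one has $\sum_{n=1}^N c_n P_n = c_N S_N + \sum_{n=1}^{N-1}(c_n - c_{n+1}) S_n$, whence $\|(H-\lambda)^{-1}\| \leq K \sum_{n \geq 1} |c_n - c_{n+1}|$. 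Because the $\lambda_n$ are real and increasing, the numbers $c_n$ run monotonically along the circle that is the image of $\RR$ under the M\"obius map $\mu \mapsto (\mu-\lambda)^{-1}$, a circle of circumference $\pi/|\Im\lambda|$; the chordal sum is therefore at most $\pi/|\Im\lambda|$, giving $\|(H-\lambda)^{-1}\| \leq \pi K /|\Im \lambda|$ for every $\lambda \notin \RR$. In other words $\sigma_{\eps}(H) \subseteq \{\lambda \in \CC \mid |\Im \lambda| < \pi K \eps\}$.

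To close the argument I would invoke the preceding lemma. Fix any small $\eps$ and the associated $\delta$, and take $\lambda = R\,\ee^{\ii(\pi/2 - \delta)}$ with $R \geq \max\{C_1, C_2 (\log \tfrac1\eps)^{6/5}\}$; such $\lambda$ lies in $\sigma_{\eps}(H)$, yet $\Im \lambda = R \cos\delta$ grows without bound as $R \to \infty$. This contradicts the strip inclusion just derived, so no finite $K$ can exist and the eigenfunctions do not form a Schauder basis.

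The main obstacle is twofold. The analytic heart is justifying the norm-convergent resolvent expansion together with the Abel-summation estimate: this is precisely the point at which the Schauder hypothesis (and not mere completeness, already established) is used, through the uniform bound on $S_N$. The more subtle point is the ordering -- the bounded-variation estimate relies on arranging the $\lambda_n$ increasingly, so I would need to argue that the basis may be taken in this natural order (or otherwise bound the total variation of $(c_n)$ independently of the ordering). Granting this, the proof rests on a clean dichotomy: a Schauder basis forces the off-axis resolvent to grow no faster than $1/|\Im\lambda|$, whereas the semiclassical pseudomodes of the preceding lemma make it blow up at points receding to infinity in the imaginary direction.
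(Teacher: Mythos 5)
Your proof is correct and reaches the right conclusion, but by a genuinely different route from the paper. The paper's own proof is a one-line appeal to Davies' theorem on wild spectral behaviour \cite[Thm.~3]{Davies-00}: since \eqref{odhad} makes the resolvent norm blow up exponentially along rays in the right half-plane, the spectral projections cannot satisfy $\|P_k\|\leq a k^{\alpha}$, so in particular they are not uniformly bounded and no Schauder basis is possible. Your argument is essentially the self-contained contrapositive of the special case of that theorem which is needed here: the Abel-summation/bounded-variation estimate, exploiting the reality and simplicity of the eigenvalues, converts the uniform bound $\sup_N\|S_N\|=K$ into $\|(H-\lambda)^{-1}\|\leq \pi K/|\Im\lambda|$, which confines $\sigma_{\eps}(H)$ to a strip and contradicts the preceding lemma. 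What your route buys is independence from the external reference (and it makes transparent exactly where the basis hypothesis, as opposed to mere completeness, enters); what it costs is the dependence on the increasing-eigenvalue ordering, which you rightly flag: the chordal-sum bound $\pi/|\Im\lambda|$ genuinely fails for a permuted enumeration, whereas the paper's projection-norm argument excludes a basis in every ordering. Since the natural ordering is the one the paper itself fixes in its definition, this is a matter of convention rather than a gap; if you want to close it anyway, note that a basis in any ordering still forces $\sup_k\|P_k\|\leq 2K$, and the known asymptotics $\lambda_k\sim c k^{6/5}$ then give $\sum_k|\lambda_k-\lambda|^{-1}<\infty$ and again a bounded resolvent off the real axis. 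Two cosmetic points: choose $\arg\lambda=\pi/2-2\delta$ so that the strict inequality $|\arg\lambda|<\pi/2-\delta$ of the preceding lemma is satisfied, and justify the strong convergence of $\sum_n(\lambda_n-\lambda)^{-1}P_n f$ by applying the bounded operator $(H-\lambda)^{-1}$ to $f=\lim_N S_N f$.
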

\begin{proof}
Let us first recall that a Schauder basis is a set $\{ \psi_k\}_{k=1}^{+\infty}\subset \hilb$ such that for every element $\psi \in \hilb$ can be uniquely expressed as $\psi = \sum_{k=1}^{+\infty} \alpha_k \psi_k$, where $\alpha_k \in \CC$ for $k=1, 2,\dots$. From the inequality \eqref{odhad} we can clearly see that the norm of the resolvent $(H-\lambda)^{-1}$ shoots up exponentially fast for $|z|$ large. Therefore the eigenfunctions of $H$ cannot be tame by \cite[Thm.~3]{Davies-00}. Specifically, if we arrange the eigenvalues $\lambda_k$ of $H$ in increasing order, the norm of spectral projection $P_k$ corresponding to $\lambda_k$ cannot satisfy
\beq
\|P_k\| \leq a k^{\alpha}
\eeq
for some $a, \alpha$ and all $k$. Therefore $\{\psi_k\}_{k=1}^{+\infty}$ cannot form a basis.
\end{proof}
\begin{lem}
$- \ii H$ is not a generator of a bounded semigroup.
\end{lem}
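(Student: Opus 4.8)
The plan is to connect the semigroup generation property to the behavior of the resolvent that we have already controlled. Recall that $-\ii H$ generates a bounded semigroup $\{\ee^{-\ii H t}\}_{t \geq 0}$ if and only if the resolvent satisfies a uniform bound: by the Hille--Yosida theorem (or more precisely the Feller--Miyadera--Phillips characterization for contraction semigroups, together with the Hille--Yosida bound for bounded semigroups), a necessary condition for $-\ii H$ to generate a bounded semigroup is that there exist a constant $M>0$ such that
\beq
\left\|\left(-\ii H - \mu\right)^{-1}\right\| \leq \frac{M}{\Re \mu}
\eeq
for all $\mu$ with $\Re \mu > 0$. Translating $\mu = -\ii \lambda$, the condition $\Re \mu > 0$ corresponds to $\Im \lambda > 0$, and the bound becomes a uniform estimate $\|(H-\lambda)^{-1}\| \leq M/\Im \lambda$ on the open upper half-plane. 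So the strategy is to contradict this necessary condition by exhibiting points $\lambda$ in the upper half-plane where the resolvent norm is enormous --- which is exactly what the pseudospectral estimate \eqref{odhad} provides.

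The key steps, in order, are as follows. First I would state the Hille--Yosida necessary condition in the translated form: if $-\ii H$ generates a bounded semigroup, then $\sup_{\Im \lambda > 0} \Im \lambda \, \|(H-\lambda)^{-1}\| < +\infty$. Second, I would invoke the previous lemma, which establishes that for every $\delta > 0$ there are points $\lambda \in \sigma_\eps(H)$ of arbitrarily large modulus inside the sector $|\arg \lambda| < \pi/2 - \delta$, and with resolvent norm exceeding $\eps^{-1}$ for $\eps$ arbitrarily small. Since $\arg \lambda$ ranges up to nearly $\pi/2$, these points have strictly positive imaginary part, hence lie in the open upper half-plane. Third, I would pick a sequence $\lambda_n$ in this sector with $|\lambda_n| \to \infty$ and resolvent norms blowing up faster than any polynomial (indeed the bound \eqref{odhad} is exponential in $\tau^{5/2} = |\lambda|^{5/6}$), and check that $\Im \lambda_n \, \|(H-\lambda_n)^{-1}\|$ is unbounded. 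The quantity $\Im \lambda_n$ grows only polynomially (at most like $|\lambda_n|$), while the resolvent norm grows super-exponentially, so the product diverges. This contradicts the uniform bound, and therefore $-\ii H$ cannot generate a bounded semigroup.

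The main obstacle is a bookkeeping point rather than a deep difficulty: I must make sure the points furnished by the pseudospectral lemma genuinely lie in the upper half-plane and that the resolvent lower bound at those points dominates the factor $\Im \lambda$. Concretely, the lemma gives $\lambda$ inside $\tau^3 A_\delta$, so I need to verify that $A_\delta$ contains points with $\Im \lambda > 0$ --- this follows because the defining inequality $|\arg \lambda| < \arctan \Re \lambda - \delta$ permits $\arg \lambda$ to be a small positive angle once $\Re \lambda$ is bounded away from zero, so the half-line of positive imaginary parts inside the sector is nonempty. Fixing such a ray and letting $\tau \to \infty$ along it, one has $\Im \lambda \asymp |\lambda| = \tau^3$ and $\|(H-\lambda)^{-1}\| > \tau^{-3} C^{\tau^{5/2}}$ from \eqref{odhad}, so the product is bounded below by $C^{\tau^{5/2}}$, which tends to $+\infty$. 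A secondary subtlety is simply citing the correct form of the generation theorem; once the resolvent growth along a ray in the upper half-plane is in hand, the contradiction with any Hille--Yosida-type uniform bound is immediate, so I would keep that citation lightweight and let the quantitative estimate do the work.
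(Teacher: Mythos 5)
Your proposal is correct and follows essentially the same route as the paper: the paper's proof simply observes that the exponential resolvent growth from the pseudospectral lemma contradicts the Hille--Yosida-type resolvent bound for generators of bounded semigroups (citing Davies, Thm.~8.2.1), which is exactly the necessary condition you state and then violate along a ray in the upper half-plane. Your version merely spells out the translation $\mu=-\ii\lambda$ and the verification that the sector contains points with $\Im\lambda>0$, details the paper leaves to the citation.
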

\begin{proof}
As in the previous proof, since the norm of resolvents grows exponentially for $|z|$ large, the claim follows from \cite[Thm.~8.2.1]{Davies07}. 
\end{proof}
The following result is a direct consequence of several propositions about operators with non-trivial pseudospectra from \cite{KrejcirikViola} which apply to $H$ as well. We summarise them and provide a compact proof.
\begin{lem}
$H$ is not similar to a self-adjoint operator via bounded and boundedly invertible transformation and $H$ is not quasi-self-adjoint with a bounded and boundedly invertible metric.
\end{lem}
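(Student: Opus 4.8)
The plan is to argue by contradiction, exploiting the exponential resolvent growth already recorded in \eqref{odhad}. First I would suppose that $H$ is similar to a self-adjoint operator, i.e.\ that there exists a bounded and boundedly invertible $\Omega$ such that $h := \Omega H \Omega^{-1}$ is self-adjoint. Similarity preserves the spectrum, so $\sigma(h) = \sigma(H)$, and by the reality of the spectrum established in \cite{shin02} we have $\sigma(h) \subset \RR$. Writing the resolvent as $(H-\lambda)^{-1} = \Omega^{-1}(h-\lambda)^{-1}\Omega$ and invoking the spectral-theorem identity $\|(h-\lambda)^{-1}\| = \mathrm{dist}(\lambda,\sigma(h))^{-1} \leq |\Im \lambda|^{-1}$, valid for every $\lambda$ off the real axis, I obtain

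\beq
\left\|(H-\lambda)^{-1}\right\| \leq \frac{\|\Omega\|\,\|\Omega^{-1}\|}{|\Im \lambda|}
\eeq

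for all $\lambda \in \CC \setminus \RR$.

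The second step is to contradict this bound using \eqref{odhad}. I would fix $\mu \in A_{\delta}$ with $\Im \mu > 0$; such points exist since $A_{\delta}$ is open and meets the upper half-plane (e.g.\ $\mu = \ee^{\ii\theta}$ for small $\theta > 0$). Taking $\lambda = \tau^3 \mu$ and letting $\tau \to \infty$, equivalently $h = \tau^{-5/2} \to 0$, the upper bound above decays like $(\tau^3 \Im \mu)^{-1}$, whereas \eqref{odhad} forces $\|(H-\tau^3\mu)^{-1}\| > \tau^{-3} C(\mu)^{\tau^{5/2}}$, which grows super-polynomially because $C(\mu) > 1$. This is impossible for $\tau$ large, so no such $\Omega$ can exist, which proves the first assertion.

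For the quasi-self-adjointness claim I would invoke the equivalence recalled in the introduction: a positive, bounded and boundedly invertible metric $\Theta$ satisfying $H^*\Theta = \Theta H$ factorises as $\Theta = \Omega^*\Omega$ with $\Omega$ bounded and boundedly invertible \cite[Prop.~1.8]{kapitola}, and then $\Omega H \Omega^{-1}$ is self-adjoint. Hence the existence of such a $\Theta$ would contradict the first part, and both statements follow simultaneously.

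I expect the only delicate point to be the correct choice of evaluation points: one must test the resolvent along a ray whose imaginary part genuinely grows, so that the self-adjoint bound is truly $O(\tau^{-3})$ and not spoiled by proximity to the (real) spectrum. This is exactly why I select $\mu$ strictly inside the upper half-plane rather than on the real axis. Everything else is routine bookkeeping with the scaling $h = \tau^{-5/2}$ already fixed in the preceding lemma.
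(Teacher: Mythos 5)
Your proof is correct and is essentially the paper's argument in slightly unpacked form: the paper phrases the contradiction via the pseudospectral inclusions $\sigma_{\eps/\kappa}(H) \subset \sigma_{\eps}(h) \subset \sigma_{\eps\kappa}(H)$ together with the fact that the pseudospectrum of a self-adjoint operator is the $\eps$-neighbourhood of its (real) spectrum, which is exactly your resolvent bound $\|(H-\lambda)^{-1}\| \leq \|\Omega\|\|\Omega^{-1}\|/|\Im\lambda|$ tested against \eqref{odhad} along a ray with growing imaginary part. Your choice of $\mu \in A_{\delta}$ with $\Im\mu > 0$ makes explicit the point the paper leaves implicit (that the offending pseudospectral points must recede from the real axis, not merely have large modulus), and the reduction of quasi-self-adjointness to similarity via $\Theta = \Omega^*\Omega$ is the same in both.
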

\begin{proof}
If $H$ were similar to a self-adjoint operator $h$ as in \eqref{similarity}, its pseudospectrum would have to satisfy
\beq
\sigma_{\varepsilon/\kappa}(H) \subset \sigma_{\varepsilon}(h) \subset \sigma_{\varepsilon \kappa}(H),
\eeq
where $\kappa = \|\Omega\|\|\Omega^{-1}\|$. However, since the pseudospectrum of $h$ is just the $\eps$-neighbourhood of its spectrum, it cannot contain arbitrarily large points as $\varepsilon/\kappa$-pseudospectrum of $H$ does. The claim about the quasi-self-adjointness \eqref{quasiH} follows from the already established equivalence from the decomposition $\Theta = \Omega^* \Omega$. 
\end{proof}
%
\section{Summary} \label{sekcectyri}
%
The harmonic oscillator coupled with an imaginary cubic oscillator potential was the main subject of interest of the present paper and we aimed to provide a detailed study of its basis and pseudospectral properties. The pseudospectrum of $H$ exhibits wild properties and contains points very far from the spectrum, which can be turned into true eigenvalues by a small perturbation of the operator. As a consequence, the eigenfunctions of $H$ do not form a Schauder basis, although they form a dense set in $L^2(\RR)$. The semigroup associated with the time-dependent Sch\"odinger equation then does not have an expansion in the basis of eigenfunctions and does not admit a bounded time-evolution. The non-trivial pseudospectrum also implies that the considered operator does not have any bounded and boundedly invertible metric and thus it cannot be faithfully represented by any self-adjoint operator in the framework of standard quantum mechanics. In conclusion let us note that all results of this paper can be directly generalised to potentials of the type $x^2 + \ii x^{2n+1}$, since all previous cited results apply to this more general case as well.
%
\section*{Acknowledgements}
%
The research was supported by the Czech Science Foundation within the project 14-06818S and by Grant Agency of the Czech Technical University in Prague, grant No. SGS13/217/OHK4/3T/14. The author would like to express his gratitude to David Krej\v ci\v r\' ik, Petr Siegl, Joseph Viola and Milo\v s Tater for valuable discussions.
%
\bibliographystyle{acm}

\begin{thebibliography}{10}

\bibitem{joe1}
{\sc {Aleman}, A., and {Viola}, J.}
\newblock {On weak and strong solution operators for evolution equations coming
  from quadratic operators}.
\newblock {\em ArXiv e-prints\/} (2014).

\bibitem{joe2}
{\sc {Aleman}, A., and {Viola}, J.}
\newblock {Singular-value decomposition of solution operators to model
  evolution equations}.
\newblock {\em ArXiv e-prints\/} (2014).

\bibitem{Almog14}
{\sc {Almog}, Y., and {Helffer}, B.}
\newblock {On the spectrum of non-selfadjoint Schr\"{o}dinger
  operators with compact resolvent}.
\newblock {\em ArXiv e-prints\/} (2014).

\bibitem{Bender-2007}
{\sc Bender, C.~M.}
\newblock {Making sense of non-Hermitian Hamiltonians}.
\newblock {\em Reports on Progress in Physics 70\/} (2007), 947--1018.

\bibitem{Bender-1998}
{\sc Bender, C.~M., and Boettcher, S.}
\newblock {Real Spectra in Non-Hermitian Hamiltonians Having $\mathcal{PT}$
  Symmetry}.
\newblock {\em Physical Review Letters 80\/} (1998), 5243--5246.

\bibitem{caliceti1980}
{\sc Caliceti, E., Graffi, S., and Maioli, M.}
\newblock {Perturbation theory of odd anharmonic oscillators}.
\newblock {\em Communications in Mathematical Physics 75}, 1 (1980), 51--66.

\bibitem{davies99}
{\sc Davies, E.~B.}
\newblock {Semi-Classical States for Non-Self-Adjoint Schr\"{o}dinger
  Operators}.
\newblock {\em Communications in Mathematical Physics 200\/} (1999), 35--41.

\bibitem{Davies-00}
{\sc Davies, E.~B.}
\newblock {Wild spectral behaviour of anharmonic oscillators}.
\newblock {\em Bulletin of the London Mathematical Society 32\/} (7 2000),
  432--438.

\bibitem{Davies07}
{\sc Davies, E.~B.}
\newblock {\em {L}inear operators and their spectra}.
\newblock Cambridge University Press, 2007.

\bibitem{delabaere}
{\sc Delabaere, E., and Pham, F.}
\newblock {Eigenvalues of complex Hamiltonians with PT-symmetry. I}.
\newblock {\em Physics Letters A 250}, 1–3 (1998), 25--28.

\bibitem{Dencker}
{\sc Dencker, N., Sj{\" o}strand, J., and Zworski, M.}
\newblock {Pseudospectra of semiclassical (pseudo-)differential operators}.
\newblock {\em Communications on Pure and Applied Mathematics 57}, 3 (2004),
  384--415.

\bibitem{dimassi}
{\sc Dimassi, M., and Sjostrand, J.}
\newblock {\em {Spectral Asymptotics in the Semi-Classical Limit}}.
\newblock London Mathematical Society Lecture Note Series. Cambridge University
  Press, 1999.

\bibitem{EE}
{\sc Edmunds, D.~E., and Evans, W.~D.}
\newblock {\em {S}pectral {T}heory and {D}ifferential {O}perators ({O}xford
  {M}athematical {M}onographs)}.
\newblock Oxford University Press, USA, 1987.

\bibitem{fernandez}
{\sc Fern{\' a}ndez, F.~M., and Garcia, J.}
\newblock {On the eigenvalues of some non-Hermitian oscillators}.
\newblock {\em Journal of Physics A: Mathematical and Theoretical 46}, 19
  (2013), 195301.

\bibitem{Gohberg90}
{\sc Gohberg, I., Goldberg, S., and Kaashoek, M.~A.}
\newblock {\em {Classes of linear operators. {V}ol. {I}}}, vol.~49 of {\em
  Operator Theory: Advances and Applications}.
\newblock Birkh\"auser Verlag, Basel, 1990.

\bibitem{henry}
{\sc Henry, R.}
\newblock {Spectral Projections of the Complex Cubic Oscillator}.
\newblock {\em Annales Henri Poincar\'{e} 15}, 10 (2014), 2025--2043.

\bibitem{hussein}
{\sc Hussein, A., Krej{\v ci\v r\'i}k, D., and Siegl, P.}
\newblock {Non-self-adjoint graphs}.
\newblock {\em Transactions of the American Mathematical Society\/} (to
  appear).

\bibitem{kapitola}
{\sc Krej{\v ci\v r\'i}k, D., and Siegl, P.}
\newblock {Elements of spectral theory without the spectral theorem}.
\newblock In {\em {Non-selfadjoint operators in quantum physics: Mathematical
  aspects}}, F.~Bagarello, J.-P. Gazeau, F.~H. Szafraniec, and M.~Znojil, Eds.
  Wiley-Interscience, to appear.

\bibitem{KrejcirikViola}
{\sc Krej{\v ci\v r\'i}k, D., {Siegl}, P., {Tater}, M., and {Viola}, J.}
\newblock {Pseudospectra in non-Hermitian quantum mechanics}.
\newblock {\em ArXiv e-prints\/} (2014), 1402.1082.

\bibitem{mezincescu}
{\sc Mezincescu, G.~A.}
\newblock {Some properties of eigenvalues and eigenfunctions of the cubic
  oscillator with imaginary coupling constant}.
\newblock {\em Journal of Physics A: Mathematical and General 33}, 27 (2000),
  4911.

\bibitem{Mostafazadeh-2010}
{\sc Mostafazadeh, A.}
\newblock {Pseudo-Hermitian Representation of Quantum Mechanics}.
\newblock {\em International Journal of Geometric Methods in Modern Physics
  7\/} (2010), 1191--1306.

\bibitem{roch}
{\sc Roch, S., and Silberman, B.}
\newblock {$C^*$-algebra techniques in numerical analysis}.
\newblock {\em Journal of Operator Theory 35}, 2 (1996), 241--280.

\bibitem{Scholtz-1992}
{\sc Scholtz, F.~G., Geyer, H.~B., and Hahne, F. J.~W.}
\newblock {Quasi-Hermitian operators in quantum mechanics and the variational
  principle}.
\newblock {\em Annals of Physics 213\/} (1992), 74--101.

\bibitem{shin02}
{\sc Shin, K.}
\newblock {On the reality of the eigenvalues for a class of PT-symmetric
  oscillators}.
\newblock {\em {Communications in mathematical physics} {229}}, {3} ({2002}),
  {543--564}.

\bibitem{Krejcirik12}
{\sc Siegl, P., and Krej{\v c}i{\v r}{\' i}k, D.}
\newblock {On the metric operator for the imaginary cubic oscillator}.
\newblock {\em Phys. Rev. D 86\/} (2012), 121702.

\bibitem{trefethenMATLAB}
{\sc Trefethen, L.~N.,}
\newblock {\em {Spectral Methods in MATLAB}}.
\newblock Society for Industrial and Applied Mathematics, 2000.

\bibitem{trefethen}
{\sc Trefethen, L.~N., and Embree, M.}
\newblock {\em {Spectra and Pseudospectra}}.
\newblock Princeton University Press, 2005.

\bibitem{zworski-remark}
{\sc Zworski, M.}
\newblock {A remark on a paper by E.B. Davies}.
\newblock {\em Proceedings of the American Mathematical Society 129\/} (2001),
  2955--2957.

\end{thebibliography}

%
\end{document}